\theoremstyle{plain}
\newtheorem{theorem}{Theorem}[section]
\newtheorem{lemma}[theorem]{Lemma}
\theoremstyle{definition}
\theoremstyle{remark}
\newtheorem*{example}{Example}
\newtheorem{remark}[theorem]{Remark}
\title{Nonlinear Markov Chains with Finite State Space: Invariant Distributions and Long-Term Behaviour}
\author{Berenice Anne Neumann \thanks{University of Trier, Department IV, Universitätsring 19, 54296 Trier, Germany}}
\begin{document}
	
	\maketitle 
	
	\begin{abstract}
 		Nonlinear Markov chains with finite state space have been introduced in Kolokoltsov (2010) \cite{KolokoltsovNonLinearMakov}.
 		The characteristic property of these processes is that the transition probabilities do not only depend on the state, but also on the distribution of the process.
 		In this paper we provide first results regarding their invariant distributions and long-term behaviour.
 		We will show that under a continuity assumption an invariant distribution exists.
 		Moreover, we provide a sufficient criterion for the uniqueness of the invariant distribution that relies on the Brouwer degree.
 		Thereafter, we will present examples of peculiar limit behaviour that cannot occur for classical linear Markov chains.
 		Finally, we present for the case of small state spaces sufficient (and easy-to-verify) criteria for the ergodicity of the process.
	\end{abstract}
	
	\section{Introduction}
	
	Nonlinear Markov processes are a particular class of stochastic processes, where the transition probabilities do not only depend on the state, but also on the distribution of the process.
	McKean \cite{McKeanNonlinearMC} introduced these type of processes to tackle mechanical transport problems. Thereafter they have been studied by several authors (see the monographs of Kolokoltsov \cite{KolokoltsovNonLinearMakov} and Sznitman \cite{SznitmanNonlinearMC}). Recently, the close connection to continuous time mean field games led to significant progress in the analysis of McKean-Vlasov SDEs, in particular the control of these systems (see for example \cite{CarmonaMcKean,PhamWeiMcKeanMFG}).
	
	In this paper, we consider a special class of these processes, namely, nonlinear Markov chains in continuous time with a finite state space and provide first insights regarding the long-term behaviour of these processes. 
	Nonlinear Markov chains with finite state space arise naturally, in particular in evolutionary biology, epidemiology and game theory. 
	Namely, the replicator dynamics, several infection models, but also the dynamics of learning procedures in game theory are nonlinear Markov chains \cite{KolokoltsovNonLinearMakov}. Moreover, also the population's dynamics in mean field games with finite state and action space are nonlinear Markov chains \cite{NeumannDiss}.
	
	The main focus of this paper lies in the characterization of the long-term behaviour of these processes.
	We show that always an invariant distribution exists and provide a sufficient criterion for the uniqueness of this invariant distribution. 
	Thereafter, we turn to the long-term behaviour, where we first illustrate by two examples that the limit behaviour is much more complex than for classical Markov chains.
	More precisely, we show that the marginal distributions of a nonlinear Markov chain might be periodic and that irreducibility of the generator does not necessarily imply ergodicity.
	Then we provide easy-to-verify sufficient criteria for ergodicity for small state spaces (two or three states).
	All conditions that we propose are simple and rely only on the shape of the nonlinear generator, not on the shape of the transition probabilities.
	
	The long-term behaviour of nonlinear Markov chains in continuous time with a finite state space has not been analysed before.
	The closest contribution in the literature are ergodicity criteria for nonlinear Markov processes in discrete time \cite{ButkovskyNonlinearMC,SaburovNonLinearMC}. These criteria are a generalization of Dobrushin's ergodicity condition and the proofs crucially rely on the sequential nature of the problem.
	
	The rest of the paper is structured as follows: In Section \ref{Section:Notation} we review the relevant definitions and notation. In Section \ref{Section:InvariantDist} we present the results on existence and uniqueness of the invariant distribution. In Section \ref{Section:Examples} we provide examples of limit behaviour that cannot arise in the context of classical Markov chains. In Section \ref{Section:Ergodicity} we present the ergodicity results for small state spaces. The Appendix \ref{appendix} contains the proofs of two technical results.

	\section{Continuous Time Nonlinear Markov Chains with Finite State Space}	
	\label{Section:Notation}
	
	This section gives a short overview over the relevant definitions, notations and preliminary facts regarding nonlinear Markov chains. For more details regarding these processes we refer the reader to \cite[Chapter 1]{KolokoltsovNonLinearMakov}.
	Moreover, it introduces the relevant notions to characterize the long-term behaviour of these processes.
	
	Let $\mathcal{S}=\{1, \ldots, S\}$ be the state space of the nonlinear Markov chain and denote by $\mathcal{P}(\mathcal{S})$ the probability simplex over $\mathcal{S}$. 
	A nonlinear Markov chain is characterized by a \textit{continuous family of nonlinear transition probabilities} $P(t,m)= (P_{ij}(t,m))_{i,j \in \mathcal{S}}$ which is a family of stochastic matrices that depends continuously on $t \ge 0$ and $m \in \mathcal{P}(\mathcal{S})$ such that the nonlinear Chapman-Kolmogorov equation 
	$$\sum_{i \in \mathcal{S}} m_i P_{ij}(t+s,m) = \sum_{i,k\in \mathcal{S}} m_i P_{ik} (t,m) P_{kj} \left( s, \sum_{l \in \mathcal{S}} m_l P_l(t,m) \right)$$
	is satisfied. As usual $P_{ij}(t,m_0)$ is interpreted as the probability that the process is in state $j$ at time $t$ given that the initial state was $i$ and the initial distribution of the process was $m_0$. 
	Such a family yields a nonlinear Markov semigroup $(\Phi^t(\cdot))_{t \ge 0}$ of continuous transformations of $\mathcal{P}(\mathcal{S})$ via 
	$$\Phi^t_j(m) = \sum_{i \in \mathcal{S}} m_i P_{ij}(t,m) \quad \text{for all } t \ge 0, m \in \mathcal{P}(\mathcal{S}), j \in \mathcal{S}.$$
	Also $\Phi^t(m_0)$ has the usual interpretation that it represents the marginal distribution of the process at time $t$ when the initial distribution is $m_0$.
	A nonlinear Markov chain with initial distribution $m_0 \in \mathcal{P}(\mathcal{S})$ is then given as the time-inhomogeneous Markov chain with initial distribution $m_0$ and transition probabilities $p(s,i,t,j) = P_{ij}(t-s, \Phi^s(m_0))$.
	
	As in the theory of classical continuous time Markov chains the infinitesimal generator will be the cornerstone of the description and analysis of such processes:
	Let $\Phi^t(m)$ be differentiable in $t=0$ and $m \in \mathcal{P}(\mathcal{S})$, then the \textit{(nonlinear) infinitesimal generator} of the semigroup $(\Phi^t(\cdot))_{t \ge 0}$ is given by a transition rate matrix function $Q(\cdot)$ such that for $f(m):= \left. \frac{\partial}{\partial t} \Phi^t(m)\right|_{t=0}$ we have $f_j(m) = \sum_{i \in \mathcal{S}} m_i Q_{ij}(m)$ for all $j \in \mathcal{S}$ and $m \in \mathcal{P}(\mathcal{S})$.
	
	By \cite[Section 1.1]{KolokoltsovNonLinearMakov} any differentiable nonlinear semigroup has a nonlinear infinitesimal generator. 
	However, the converse problem is more important:
	Given a transition rate matrix function (that is a function $Q: \mathcal{P}(\mathcal{S}) \rightarrow \mathbb{R}^{S \times S}$ such that $Q(m)$ is a transition rate matrix for all $m \in \mathcal{P}(\mathcal{S})$) is there a nonlinear Markov semigroup (and thus a nonlinear Markov chain) such that $Q$ is the nonlinear infinitesimal generator of the process?
	Relying on the semigroup identity $\Phi^{t+s} = \Phi^t \Phi^s$ this problem is equivalent to the following Cauchy problem:
	Is there, for any $m_0 \in \mathcal{P}(\mathcal{S})$ a solution $(\Phi^t(m_0))_{t \ge 0}$ of 
	$$\frac{\partial}{\partial t} \Phi^t(m_0) = \Phi^t(m_0) Q( \Phi^t(m_0)), \quad \Phi^0(m_0)=m_0,$$
	such that $\Phi^t(\cdot)$ is a continuous function ranging from $\mathcal{P}(\mathcal{S})$ to itself and such that $\Phi^t(m) \in \mathcal{P}(\mathcal{S})$ for all $t \ge 0$ and $m \in \mathcal{P}(\mathcal{S})$.
	
	In the monograph \cite{KolokoltsovNonLinearMakov} the problem to construct a semigroup from a given generator is treated in a very general setting. Here, we present a result with easy-to-verify conditions tailored for the specific situation of nonlinear Markov chains with finite state space. The proof of the result, which relies on classical arguments from ODE theory, is presented in the appendix.
	
	\begin{theorem}
		\label{LipschitzContinuousGenerator}
		Let $Q: \mathcal{P}(\mathcal{S}) \rightarrow \mathbb{R}^{S \times S}$ be a transition rate matrix function such that $Q_{ij}(m)$ is Lipschitz continuous for all $i,j \in \mathcal{S}$.
		Then there is a unique Markov semigroup $(\Phi^t(\cdot))_{t \ge 0}$ such that $Q$ is the infinitesimal generator for $(\Phi^t(\cdot))_{t \ge 0}$.
	\end{theorem}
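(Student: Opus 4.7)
The plan is to recast the statement as a global existence, uniqueness and simplex-invariance result for the Cauchy problem written in the excerpt, namely $\dot\Phi^t(m_0) = \Phi^t(m_0)\,Q(\Phi^t(m_0))$ with $\Phi^0(m_0)=m_0$. Since the right-hand side $F(x):=xQ(x)$ is a product of two Lipschitz quantities (a linear factor in $x$ and the Lipschitz function $Q$), it is locally Lipschitz on $\mathbb{R}^S$; after extending each $Q_{ij}$ Lipschitz continuously to all of $\mathbb{R}^S$ (e.g.\ by McShane extension, or by pre-composition with a Lipschitz retraction onto $\mathcal{P}(\mathcal{S})$), the classical Picard--Lindel\"of theorem yields, for each starting point, a unique maximal solution. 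The remaining tasks are (i) to confine this solution to the simplex for all positive times, (ii) to derive continuity in $m_0$, and (iii) to verify the semigroup identity, the generator identification and the uniqueness of the semigroup.

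The core step is the invariance of $\mathcal{P}(\mathcal{S})$. I would handle it in two pieces. For the affine constraint $\sum_j x_j = 1$, differentiating along the flow and using the row-sum-zero property of a rate matrix gives $\tfrac{d}{dt}\sum_j \Phi^t_j(m_0) = \sum_i \Phi^t_i(m_0)\sum_j Q_{ij}(\Phi^t(m_0)) = 0$, so the sum is preserved. For non-negativity I would argue by a first-exit-time trick: set $C:=\sup_{m \in \mathcal{P}(\mathcal{S})}\max_j |Q_{jj}(m)|$, which is finite by continuity on the compact simplex, and note that as long as the solution has all non-negative components,
\[
\tfrac{d}{dt}\Phi^t_j(m_0) = \sum_{i\neq j}\Phi^t_i(m_0)\,Q_{ij}(\Phi^t(m_0)) + \Phi^t_j(m_0)\,Q_{jj}(\Phi^t(m_0)) \ge -C\,\Phi^t_j(m_0),
\]
because off-diagonal entries of a rate matrix are non-negative. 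Applying Gr\"onwall's inequality to $\Phi^t_j$ on the maximal interval where the solution stays non-negative then forces $\Phi^t_j(m_0)\ge m_{0,j}e^{-Ct}\ge 0$, ruling out a first time at which a coordinate becomes strictly negative. Combining this with sum preservation confines the flow to $\mathcal{P}(\mathcal{S})$; since the simplex is compact, the maximal solution is in fact global.

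Once invariance is in hand, the rest is standard. Continuity of $m_0 \mapsto \Phi^t(m_0)$ follows from the classical Gr\"onwall-based continuous dependence on initial data, using the Lipschitz constant of $F$ on the (compact) simplex. The semigroup identity $\Phi^{t+s}=\Phi^t\circ\Phi^s$ is a consequence of uniqueness: both sides solve the same Cauchy problem with initial value $\Phi^s(m_0)$ at time zero. Evaluating the ODE at $t=0$ directly gives $\left.\tfrac{\partial}{\partial t}\Phi^t(m)\right|_{t=0}=mQ(m)$, which is precisely the definition of $Q$ being the infinitesimal generator. Finally, any other differentiable Markov semigroup with generator $Q$ satisfies the same ODE on $\mathcal{P}(\mathcal{S})$, so uniqueness of the semigroup follows from uniqueness of the ODE solution. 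I expect the main obstacle to be the invariance step: establishing the boundary behaviour rigorously (especially the non-trivial case in which a coordinate starts at or reaches zero) requires the Gr\"onwall/first-exit argument sketched above, rather than just the instantaneous sign of the derivative at the boundary.
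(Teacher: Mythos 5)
Your proposal follows the same skeleton as the paper's proof: a Lipschitz bound for the right-hand side on the compact simplex, a McShane-type Lipschitz extension to $\mathbb{R}^S$, Picard--Lindel\"of for existence and uniqueness, invariance of $\mathcal{P}(\mathcal{S})$, and the classical continuous-dependence theorem for continuity in $m_0$. The only genuine divergence is the invariance step. The paper verifies that $f(m)$ lies in the Bouligand tangent cone $T_{\mathcal{P}(\mathcal{S})}(m)$ at every $m$ (non-negative $j$-th entry wherever $m_j=0$, zero component sum by conservativity) and then invokes a packaged Nagumo-type flow-invariance theorem from Walter's ODE book; you instead run a hands-on Gr\"onwall/first-exit argument. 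Your route is more elementary and self-contained, but as written it has a soft spot: the bound $\Phi^t_j(m_0)\ge m_{0,j}e^{-Ct}$ is only informative when $m_{0,j}>0$. For boundary initial data ($m_{0,j}=0$) it yields nothing beyond the non-negativity you already assumed on the interval in question, so it does not by itself rule out a first time at which a coordinate becomes strictly negative: at such a candidate exit time the coordinate equals $0$ and its derivative may vanish as well. To close this you need one more observation --- for instance that $\frac{d}{dt}\Phi^t_j\ge \Phi^t_j\,\tilde Q_{jj}(\Phi^t)\ge 0$ whenever $\Phi^t_j\le 0$ while the remaining coordinates and the off-diagonal rates stay non-negative, or an approximation from the interior combined with continuous dependence --- or you simply cite the Nagumo/tangent-cone theorem as the paper does. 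Relatedly, both your sign argument and your sum-preservation computation use the rate-matrix structure of $Q$ along the trajectory, so the extension should be arranged to keep off-diagonal entries non-negative and row sums zero (extend the off-diagonals, take positive parts, define the diagonal as minus the row sum); a raw entrywise Lipschitz extension need not have these properties off the simplex. On the plus side, you spell out the semigroup identity, the generator identification and the uniqueness of the semigroup, which the paper's proof leaves implicit.
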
  
	
	In this paper we are now mainly interested in the characterization of the long-term behaviour of nonlinear Markov chains:
	We say that $m \in \mathcal{P}(\mathcal{S})$ is an invariant distribution if $\frac{\partial}{\partial t} \Phi^0(m) = 0$ and thus also $\frac{\partial}{\partial t} \Phi^t(m) = 0$. An equivalent condition with respect to the generator is that a vector $m \in \mathcal{P}(\mathcal{S})$ is an invariant distribution if it solves $0 = m^TQ(m)$.

	We say that a nonlinear Markov chain with nonlinear semigroup $(\Phi^t(\cdot))_{t \ge 0}$ is \textit{strongly ergodic} if there exists an $\bar{m} \in \mathcal{P}(\mathcal{S})$ such that for all $m_0 \in \mathcal{P}(\mathcal{S})$ we have 
	$$\lim_{t \rightarrow \infty} \left| \left| \Phi^t(m_0) - \bar{m} \right| \right| = 0.$$	
	
	\section{Existence and Uniqueness of the Invariant Distribution}
	\label{Section:InvariantDist}
	
	The invariant distributions of a nonlinear Markov chain are exactly the fixed points of the set-valued map $$s: \mathcal{P}(\mathcal{S}) \rightarrow 2^{\mathcal{P}(\mathcal{S})}, \quad  m \mapsto \{ x \in \mathcal{P}(\mathcal{S}): 0 = x^TQ(m)\}.$$
	Using Kakutani's fixed point theorem, we directly obtain the existence of an invariant distribution for any generator:
	
	\begin{theorem}
		Let $Q(\cdot)$ be a nonlinear generator such that the map $Q: \mathcal{P}(\mathcal{S}) \rightarrow \mathbb{R}^{S\times S}$ is continuous. Then the nonlinear Markov chain with generator $Q(\cdot)$ has an invariant distribution.
	\end{theorem}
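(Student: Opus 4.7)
My plan is to apply Kakutani's fixed point theorem to the set-valued map $s$ defined right before the theorem statement; a fixed point $m \in s(m)$ is exactly an invariant distribution. To do so I need to verify the four hypotheses of Kakutani: the domain $\mathcal{P}(\mathcal{S})$ is nonempty, convex and compact; the values $s(m)$ are nonempty; the values $s(m)$ are convex; and $s$ has closed graph (equivalently, is upper hemicontinuous with compact values).

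The first item is immediate since $\mathcal{P}(\mathcal{S})$ is the standard simplex in $\mathbb{R}^S$. For the second, I would note that for each fixed $m$, the matrix $Q(m)$ is an ordinary transition rate matrix on the finite state space $\mathcal{S}$; by the standard existence result for invariant distributions of finite-state continuous-time Markov chains (e.g.\ as a consequence of Brouwer or even by linear algebra applied to the finite-dimensional kernel of $Q(m)^T$ intersected with the simplex), there exists at least one $x \in \mathcal{P}(\mathcal{S})$ with $x^T Q(m) = 0$. Convexity of $s(m)$ follows because $s(m)$ is the intersection of the affine subspace $\{x \in \mathbb{R}^S : x^T Q(m) = 0\}$ with the convex set $\mathcal{P}(\mathcal{S})$.

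The step that requires a bit of care, and is the main (though mild) obstacle, is the closed graph property. I would show it directly: suppose $m_n \to m$ in $\mathcal{P}(\mathcal{S})$ and $x_n \to x$ with $x_n \in s(m_n)$. Then $x_n \in \mathcal{P}(\mathcal{S})$ and $x_n^T Q(m_n) = 0$ for every $n$. Since $\mathcal{P}(\mathcal{S})$ is closed, $x \in \mathcal{P}(\mathcal{S})$. Since $Q$ is continuous, $Q(m_n) \to Q(m)$, and taking limits in the bilinear expression $x_n^T Q(m_n)$ yields $x^T Q(m) = 0$, so $x \in s(m)$. This proves upper hemicontinuity with closed (hence compact, as subsets of the compact simplex) values.

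With all four hypotheses verified, Kakutani's theorem yields some $\bar m \in \mathcal{P}(\mathcal{S})$ with $\bar m \in s(\bar m)$, i.e.\ $\bar m^T Q(\bar m) = 0$, which by the characterization given in Section \ref{Section:Notation} is an invariant distribution of the nonlinear Markov chain. I do not expect any genuine technical difficulty; the only subtlety is remembering that the map is set-valued (since $Q(m)$ may fail to have a unique invariant distribution), which is precisely why Kakutani rather than Brouwer is the appropriate tool.
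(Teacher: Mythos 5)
Your proof is correct and follows essentially the same route as the paper: both apply Kakutani's fixed point theorem to the set-valued map $s$, verifying nonemptiness, convexity and compactness of the values and the closed-graph property via continuity of $Q$. The only cosmetic difference is that the paper cites the structure theorem for invariant distributions of a fixed finite generator to get these value properties in one stroke, whereas you argue nonemptiness and convexity separately.
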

	
	\begin{proof}
		By \cite[Theorem 5.3]{IosifescuFiniteState} the set of all invariant distributions given a fixed generator matrix $Q(m)$ is the convex hull of the invariant distributions given the recurrent communication classes of $Q(m)$. 
		Therefore, the values of the map $s$ are non-empty, convex and compact.		
		Moreover, the graph of the map $s$ is closed: Let $(m^n, x^n)_{n \in \mathbb{N}}$ be a converging sequence such that $x^n \in s(m^n)$. Denote its limit by $(m,x)$. Then $0 = (x^n)^T Q(m^n)$ for all $n \in \mathbb{N}$. By continuity of $Q(\cdot)$ we have $0=x^TQ(m)$, which implies $x \in s(m)$.
		Thus, Kakutani's fixed point theorem yields a fixed point of the map $s$, which is an invariant distribution given $Q(\cdot)$.
	\end{proof}
	
	If $Q(m)$ is irreducible for all $m \in \mathcal{P}(\mathcal{S})$, the sets $s(m)$ will be singletons \cite[Theorem 4.2]{AsmussenQueues2003}.
	Let $x(m)$ denote this point.
	We remark that there are explicit representation formulas for $x(m)$ (e.g. \cite{NeumannComputation,ResnickAdventures}).
	With these insights we provide the following sufficient criterion for the uniqueness of the invariant distribution:
	
	\begin{theorem}
		Assume that $Q(m)$ is irreducible for all $m \in \mathcal{P}(\mathcal{S})$. Furthermore, assume that $f(m):=x(m)-m$ is continuously differentiable and that the matrix
		$$M(m) := \begin{pmatrix}
		\frac{\partial f_1(m)}{\partial m_1} & \ldots & \frac{\partial f_1(m)}{\partial m_{S-1}} \\
		\vdots & \ddots & \vdots \\
		\frac{\partial f_{S-1}(m)}{\partial m_1} & \ldots & \frac{\partial f_{S-1}(m)}{\partial m_{S-1}}
		\end{pmatrix} -
		\begin{pmatrix}
		\frac{\partial f_1(m)}{\partial m_S} & \ldots & \frac{\partial f_1(m)}{\partial m_S} \\
		\vdots & \ddots & \vdots \\
		\frac{\partial f_{S-1}(m)}{\partial m_S} & \ldots & \frac{\partial f_{S-1}(m)}{\partial m_S}
		\end{pmatrix}$$ is non-singular for all $m \in \mathcal{P}(\mathcal{S})$. Then there is a unique invariant distribution.
	\end{theorem}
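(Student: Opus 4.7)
The plan is to apply Brouwer degree theory to the vector field $f(m) := x(m) - m$ on the simplex $\mathcal{P}(\mathcal{S})$. First I would parameterize $\mathcal{P}(\mathcal{S})$ by its first $S-1$ coordinates: set $\Delta := \{(m_1, \ldots, m_{S-1}) : m_i \ge 0,\ \sum_{i=1}^{S-1} m_i \le 1\}$ together with $m_S := 1 - \sum_{i=1}^{S-1} m_i$, and define $\tilde f : \Delta \to \mathbb{R}^{S-1}$ by $\tilde f_j(m) := f_j(m)$ for $j = 1, \ldots, S-1$. Since $x(m)$ and $m$ are both probability vectors, $\sum_{j=1}^{S} f_j(m) = 0$, so $f(m) = 0$ if and only if $\tilde f(m) = 0$; invariant distributions therefore correspond bijectively to zeros of $\tilde f$ in $\Delta$. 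A direct chain-rule calculation shows that $D\tilde f(m) = M(m)$.

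Two preliminary facts follow. First, every zero of $\tilde f$ lies in the interior $\Delta^\circ$: irreducibility of $Q(m)$ forces $x(m)$ to have only strictly positive entries, so on any face $\{m_i = 0\}$ with $i \le S-1$ one has $f_i(m) = (x(m))_i > 0$, and similarly $f_S(m) > 0$ on the face $\{m_S = 0\}$. Second, since $\mathcal{P}(\mathcal{S})$ is connected, $\det M$ is continuous, and $M$ is non-singular everywhere by assumption, the sign of $\det M(m)$ is a constant $\sigma \in \{+1, -1\}$ on the whole simplex. Consequently $0$ is a regular value of $\tilde f$, and the Brouwer degree formula reads
$$\deg(\tilde f, \Delta^\circ, 0) \;=\; \sum_{\tilde f(m) = 0} \operatorname{sign}(\det M(m)) \;=\; \sigma \cdot N,$$
where $N$ is the (necessarily finite) number of invariant distributions.

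The main step is to compute $\deg(\tilde f, \Delta^\circ, 0)$ independently. I fix an interior point $\tilde m_0 \in \Delta^\circ$ and use the straight-line homotopy $h_t(m) := t\, \tilde f(m) + (1-t)(\tilde m_0 - m)$ for $t \in [0,1]$. The key check is that $h_t$ has no zero on $\partial \Delta$: on a face $\{m_i = 0\}$ with $i \le S-1$, both $\tilde f_i(m) = (x(m))_i > 0$ and $(\tilde m_0 - m)_i = (\tilde m_0)_i > 0$, so the $i$-th coordinate of $h_t(m)$ is strictly positive; on the face $\{m_S = 0\}$, the sums $\sum_{j=1}^{S-1} \tilde f_j(m) = -(x(m))_S < 0$ and $\sum_{j=1}^{S-1}(\tilde m_0 - m)_j = -(\tilde m_0)_S < 0$ make the component sum of $h_t(m)$ strictly negative. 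Homotopy invariance therefore gives $\deg(\tilde f, \Delta^\circ, 0) = \deg(\tilde m_0 - \cdot,\, \Delta^\circ, 0) = (-1)^{S-1}$, since the affine map $m \mapsto \tilde m_0 - m$ has a single regular zero at $\tilde m_0$ with Jacobian $-I$.

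Combining the two expressions for the degree forces $N = 1$, which is the desired uniqueness. The main technical hurdle is the boundary analysis for the homotopy, although the coordinate-wise argument above handles both face types cleanly; the rest is a standard application of degree theory. Note that the hypothesis that $M(m)$ is non-singular on all of $\mathcal{P}(\mathcal{S})$ (rather than only at zeros of $f$) is essential, as it is precisely what makes all zeros contribute with the same sign to the degree sum.
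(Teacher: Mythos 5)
Your proposal is correct and follows essentially the same route as the paper: project onto the first $S-1$ coordinates, identify the Jacobian of the reduced map with $M(m)$, use irreducibility to keep zeros (and the homotopy) off the boundary, and conclude from $\deg = (-1)^{S-1}$ together with the constant sign of $\det M$ that there is exactly one zero. The only cosmetic difference is that you homotope to $\tilde m_0 - m$ for an arbitrary interior point rather than the paper's specific reference point, which changes nothing.
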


	\begin{proof}
		We first note that any invariant distribution of a nonlinear Markov chain with generator $Q(\cdot)$ is an invariant distribution $m$ of a classical Markov chain with generator $Q(m)$.
		Since any invariant distribution of a classical Markov chain with generator $Q(m)$ has to satisfy that all components are strictly positive \cite[Theorem 4.2]{AsmussenQueues2003}, no invariant distribution of $Q(\cdot)$ lies on the boundary of $\mathcal{P}(\mathcal{S})$. 
		Therefore, we only need to ensure the existence of a unique invariant distribution in the interior of $\mathcal{P}(\mathcal{S})$.
		 
		The set $\mathcal{P}(\mathcal{S})$ is homeomorphic to $\bar{\Omega}$ with $$\Omega = \left\{m \in \mathbb{R}^{S-1}: m_i >0 \forall i \in \{1, \ldots, S-1\} \wedge \sum_{i =1}^{S-1} m_i <1 \right\},$$ where the continuous bijections are given as the restrictions of
		\begin{align*}
		&\phi: \mathbb{R}^{S-1} \rightarrow \mathbb{R}^S, \quad (m_1, \ldots, m_{S-1}) \mapsto \left(m_1, \ldots, m_{S-1}, 1- \sum_{i=1}^{S-1} m_i \right) \\
		&\psi: \mathbb{R}^S \rightarrow \mathbb{R}^{S-1}, \quad (m_1, \ldots, m_{S-1},m_S) \mapsto (m_1, \ldots, m_{S-1}).
		\end{align*}
		
		Define $\bar{f}: \bar{\Omega} \rightarrow \bar{\Omega}$ by $m \mapsto \psi(f(\phi(m)))$.
		By the chain rule we obtain
		\allowdisplaybreaks
		\begin{align*}
			\frac{\partial \bar{f}(m)}{\partial m} 
			&= \frac{\partial \psi}{\partial m}(f(\phi(m)) \cdot \frac{\partial f}{\partial m} (\phi(m)) \cdot \frac{\partial \phi}{\partial m} (m) \\
			&= \begin{pmatrix}
			1 & 0 & \ldots & 0 & 0 \\
			0 & 1 & \ddots & \vdots &\vdots \\
			\vdots & \ddots & \ddots & 0 & 0 \\
			0 & \ldots & 0 & 1 & 0 
			\end{pmatrix} \cdot
			\begin{pmatrix}
			\frac{\partial f_1(m)}{\partial m_1} & \ldots & \frac{f_1(m)}{\partial m_S} \\
			\vdots & \ddots & \vdots \\
			\frac{\partial f_S(m)}{\partial m_1} & \ldots & \frac{f_S(m)}{\partial m_S} 
			\end{pmatrix} \cdot
			\begin{pmatrix}
			1 & 0 & \ldots & 0 \\
			0 & 1 & \ddots & \ldots \\
			\vdots & \ddots &\ddots & 0 \\
			0 & \ldots & 0 & 1 \\
			-1 & -1 & \ldots & -1
			\end{pmatrix} \\
			&= M \left(\left(m_1, \ldots, m_{S-1}, 1- \sum_{i=1}^{S-1} m_i\right)^T\right).
		\end{align*} The matrix $M(m)$ is, by assumption, non-singular for all $m \in \mathcal{P}(\mathcal{S})$.
		Thus, 
		$$ \det \left( \frac{\partial \bar{f}(m)}{\partial m} \right) \neq 0 \quad \text{for all } m \in \bar{\Omega}.$$
		Since $\phi$, $\psi$, $f$ and $\text{det}$ are continuous functions, we obtain that also the function $m \mapsto \det ( \frac{\partial \bar{f}(m)}{\partial m})$ is continuous. Thus, the intermediate value theorem yields that $\det( \frac{\partial \bar{f}(m)}{\partial m})$ has uniform sign over $\bar{\Omega}$.
		
		Furthermore, we note that by assumption $M(m)$ is in particular non-singular for all $m \in \phi ( \bar{f}^{-1}(\{0\}))$.
		Thus, $0$ is a non-critical value of $\bar{f}$.

		The map $\bar{h}:[0,1] \times \bar{\Omega} \rightarrow \mathbb{R}^{S-1}$ given by
		\begin{align*}
			\bar{h}(t,m) &= t \cdot \bar{f}(m) + (1-t) \cdot \left( \frac{(S-1)}{S} (1, \ldots, 1)^T - m\right) \\
			&= t \cdot \psi (x(\phi(m)) + (1-t) \cdot \frac{S-1}{S} (1, \ldots, 1)^T -m
		\end{align*} is continuous.
		Furthermore, $0 \notin \bar{h}(t, \partial \Omega)$: Indeed, a point $m \in \partial \Omega$ either satisfies $m_i=0$ for some $i \in \{1, \ldots, S-1\}$ or $\sum_{i=1}^{S-1} m_i = 1$.
		However, by \cite[Theorem 4.2]{AsmussenQueues2003}, all components of the invariant distribution for an irreducible generator are strictly positive.
		Thus, we obtain in the first case that $h_i(t,m)>0$ and in the second case that the sum of all components is strictly negative, which in both cases implies that $h(t, m) \neq 0$.
		
		With these preparations we can make use of the Brouwer degree (see \cite[Section 1.1 and 1.2]{DeimlingDegree1985}), namely we obtain that
		$$\text{deg} \left( \frac{S-1}{S} (1, \ldots, 1)^T - m, \Omega, 0 \right) = \text{deg} (\bar{f}, \Omega, 0).$$
		Since for continuously differentiable maps $g$ and regular values $y \notin g(\partial \Omega)$ the degree is given by
		$$\text{deg}(g, \Omega, y) = \sum_{x \in g^{-1} (\{y\})} \text{sgn } \text{det} \left( \frac{\partial g}{\partial x}(x)\right),$$
		we obtain that
		$$(-1)^{S-1} = \sum_{m \in \bar{f}^{-1}(\{0\})} \text{sgn } \text{det} \left( \frac{\partial}{\partial m} \bar{f}(m) \right).$$
		Because the determinant has uniform sign over $\Omega \supseteq \bar{f}^{-1}(\{0\})$, we obtain that $\bar{f}^{-1}(\{0\})$ consists of exactly one element.
		Thus, there is a unique stationary point for the nonlinear Markov chain with nonlinear generator $Q(\cdot)$.		
	\end{proof}

	\begin{example}
		We illustrate the use of the result in an example: Consider a nonlinear Markov chain with the following generator
		\[
		Q(m) = \begin{pmatrix}
		-(b+ em_1+ \epsilon) & b & em_1 + \epsilon \\
		0 & -(em_2 + \epsilon) & em_2 + \epsilon \\
		\lambda & \lambda & - 2 \lambda
		\end{pmatrix},
		\]
		where all constants are strictly positive. This nonlinear Markov chain arises in a mean field game model of consumer choice with congestion effects (see \cite{NeumannDiss}, also for detailed calculations). In this setting the invariant distributions are given as the solution(s) of the nonlinear equation $0 = m^T Q(m)$, for which closed form solutions are hard or impossible to obtain. However, it is possible to verify that the matrix $M(m)$ is non-singular for all $m \in \mathcal{P}(\mathcal{S})$ yielding a unique invariant distribution. This information can in particular be used, to obtain certain characteristic properties of the solutions.
	\end{example}

	\section{Examples for Peculiar Limit Behaviour}
	\label{Section:Examples}
	
	The following examples show that the limit behaviour for nonlinear Markov chains (also in the case of small state spaces) is more complex than for classical continuous time Markov chains.
	In particular, the marginal distributions might not converge, but are periodic and a nonlinear Markov chain with an irreducible nonlinear generator might not be strongly ergodic, but we observe convergence towards several different invariant distributions.
		
	\subsection{An Example with Periodic Marginal Distributions}
	\label{Subsection:Periodic}
	
	Let $B = \mathcal{P}(\{1,2,3\}) \cap \{m \in \mathbb{R}^3: \min \{m_1, m_2, m_3\}  \ge \frac{1}{10} \}$ and set for all $m \in B$ the matrix $Q$ as follows
	\begin{align*}
		Q_{13}(m) &= 
		\frac{1}{m_3} \left( \frac{1}{3} - m_1 \right) \mathbb{I}_{\{m_1 \le \frac{1}{3} \}} &
		Q_{23}(m) &= \frac{1}{m_1} \left( m_1 - \frac{1}{3} \right) \mathbb{I}_{\{m_1 \ge \frac{1}{3} \}} \\
		Q_{31}(m) &= \frac{1}{m_3} \left( m_2 - \frac{1}{3} \right) \mathbb{I}_{\{m_2 \ge \frac{1}{3} \}} & 
		Q_{32}(m) &= \frac{1}{m_2} \left( \frac{1}{3} - m_2 \right) \mathbb{I}_{\{m_2 \le \frac{1}{3} \}} \\
		Q_{12} (m) &= Q_{21}(m) = 0 & Q_{ii}(m) &= - \sum_{j \neq i} Q_{ij}(m), 
	\end{align*} where $\mathbb{I}_A$ is $1$ if $A$ is true and $0$ else.
	Since all transition rates on $B$ are Lipschitz continuous functions, there is an extension of $Q_{ij}(\cdot)$ on $\mathcal{P}(\mathcal{S})$ for all $i, j \in \mathcal{S}$, which is again Lipschitz continuous.
	Thus, a nonlinear Markov chain with generator $Q$ exists.
	The ordinary differential equation characterizing the marginals on $B$ reads
	\begin{align*}
	\frac{\partial}{\partial t} \Phi_1^t(m_0) &= \begin{cases}
	\Phi_1^t(m_0) \cdot \left( - \frac{1}{\Phi_1^t(m_0)} \left(\frac{1}{3} - \Phi_2^t(m_0) \right) \right) & \Phi_2^t(m_0) \le \frac{1}{3} \\
	\Phi_3^t(m_0) \cdot \left( \frac{1}{\Phi_3^t(m_0)} \left( \Phi_2^t(m_0) - \frac{1}{3} \right) \right) & \Phi_2^t(m_0) \ge \frac{1}{3}
	\end{cases} \\
	&= \Phi_2^t(m_0) - \frac{1}{3} \\
	\frac{\partial}{\partial t} \Phi_2^t(m_0) &= \begin{cases}
	\Phi_2^t(m_0) \cdot \left( - \frac{1}{\Phi_2^t(m_0)} \left( \Phi_1^t(m_0) - \frac{1}{3} \right) \right) & \Phi_1^t(m_0) \ge \frac{1}{3} \\
	\Phi_3^t(m_0) \cdot \left( \frac{1}{\Phi_3^t(m_0)} \left( \frac{1}{3} - \Phi_1^t(m_0) \right) \right) & \Phi_1^t(m_0) \le \frac{1}{3} 
	\end{cases} \\
	&= \frac{1}{3} - \Phi_1^t(m_0) \\
	\frac{\partial}{\partial t} \Phi_3^t(m_0) &= \Phi_1^t(m_0) -\Phi_2^t(m_0).	
	\end{align*}
	Thus, for any neighbourhood $U \subseteq B$ of $\left( \frac{1}{3}, \frac{1}{3}, \frac{1}{3} \right)^T$ the first two components of the marginal behave like the classical harmonic oscillator.
	Therefore, there are initial distributions such that the marginals are periodic. 
	An example is the initial distribution $m_0 = (0.2, 0.4, 0.4)$ for which the marginals are plotted in Figure \ref{NonlinearCTMCPEriodicTrajectory}.
	
	\begin{figure}[h]
		\begin{center}
			\includegraphics[scale=0.8]{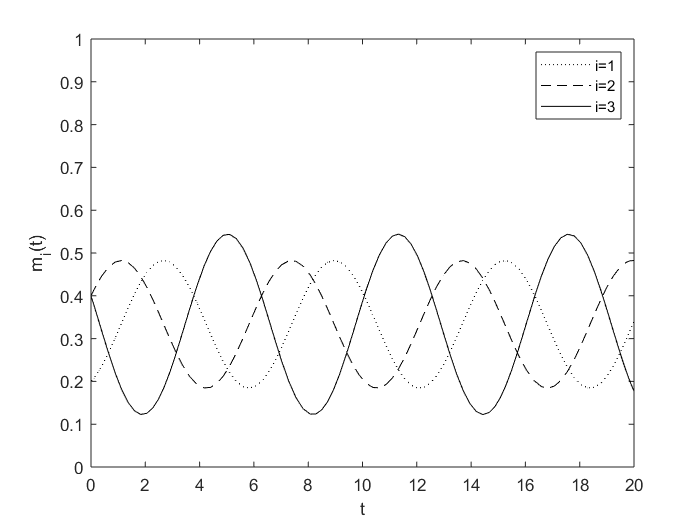}
		\end{center}
		\caption{The marginal distributions of the nonlinear continuous time Markov chain with initial distribution $m_0 = (0.2,0.4,0.4)$.}
		\label{NonlinearCTMCPEriodicTrajectory}
	\end{figure}
	
	\subsection{An Example of a Nonlinear Markov Chain with Irreducible Generator that is not Strongly Ergodic}
	
	Let 
	$$Q(m) = \begin{pmatrix}
	- \left( \frac{29}{3} m_1^2 - 16 m_1 + \frac{22}{3}\right) &
	\frac{29}{3} m_1^2 - 16 m_1 + \frac{22}{3} \\
	m_1^2 + m_1 +1  & - \left( m_1^2 + m_1 +1 \right) 
	\end{pmatrix}.$$
	This matrix is irreducible for all $m \in \mathcal{P}(\{1,2\})$ since $m_1^2 + m_1 +1 \ge 1$ and $\frac{29}{3} m_1^2 - 16 m_1 + \frac{22}{3} \ge \frac{62}{82}$ for all $m_1 \ge 0$.
	
	The ordinary differential equation describing the marginals for the initial condition $m_0 \in \mathcal{P}(\{1,2\})$ is given by
	\begin{align*}
		\frac{\partial}{\partial t} \Phi_1^t(m_0) &= - \frac{32}{3} \left(\Phi_1^t(m_0)\right)^3 + 16 \left(\Phi_1^t(m_0)\right)^2 - \frac{22}{3} \Phi_1^t(m_0) +1=:f \left(\Phi_1^t(m_0)\right)\\
		\frac{\partial}{\partial t} \Phi_2^t(m_0) &= \frac{32}{3} \left(\Phi_1^t(m_0)\right)^3 - 16 \left(\Phi_1^t(m_0)\right)^2 + \frac{22}{3} \Phi_1^t(m_0) -1 = -f \left(\Phi_1^t(m_0)\right).
	\end{align*} We obtain that there are three stationary points $m^1= (0.25,0.75)$, $m^2 = (0.5,0.5)$ and $m^3=(0.75,0.25)$ and the following convergence behaviour:
	\begin{itemize}
		\item Since the function $f(\cdot)$ is strictly positive on $[0,0.25)$, the trajectories will for all initial conditions $(m_0)_1 \in [0,0.25)$ converge towards $m_1 = 0.25$. 
		\item Since the function $f(\cdot)$ is strictly negative on $(0.25,0.5)$, the trajectories will for all initial conditions $(m_0)_1 \in (0.25,0.5)$ converge towards $m_1 = 0.25$. 
		\item Since the function $f(\cdot)$ is strictly positive on $(0.5,0.75)$, the trajectories will for all initial conditions $(m_0)_1 \in (0.5,0.75)$ converge towards $m_1 = 0.75$. 
		\item Since the function $f(\cdot)$ is strictly negative on $(0.75,1]$, the trajectories will for all initial conditions $(m_0)_1 \in (0.75,1]$ converge towards $m_1 = 0.75$. 
	\end{itemize}
	This behaviour is visualized in Figure \ref{IrreducibleNotStrongErgodicFigure}, where several trajectories for different initial conditions are plotted.
	
	\begin{figure}[h]
		\begin{center}
			\includegraphics[scale=0.8]{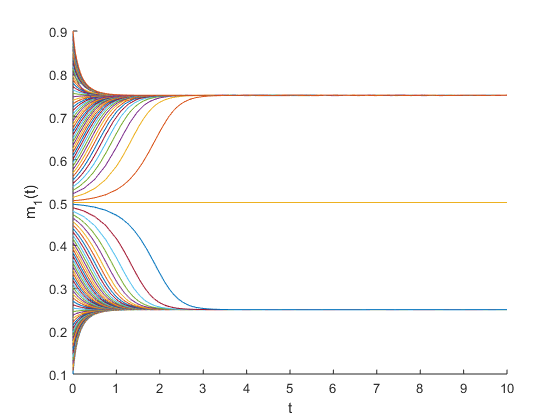}
		\end{center}
		\caption{The trajectories of the nonlinear Markov chain for several initial conditions.}
		\label{IrreducibleNotStrongErgodicFigure}
	\end{figure}
 	
	\section{Sufficient Criteria for Ergodicity for Small State Spaces}
	\label{Section:Ergodicity}
	
	Although the limit behaviour is more complex for nonlinear Markov chains, we still obtain sufficient criteria for ergodicity in the case of a small number of states. Here, we present these criteria, discuss applicability as well as the problems that occur for larger state spaces.
	
	\begin{theorem}
		\label{MyopicGlobal:NonlinearMC:StrongErgodicS=2}
		Let $S=2$ and assume that $f: [0,1] \rightarrow \mathbb{R}$ defined via $$f(m_1) := m_1 \cdot (Q_{11}(m_1,1-m_1)) + (1-m_1) \cdot Q_{21}(m_1,1-m_1)$$ is continuous. Furthermore, assume that $(\bar{m},1-\bar{m})$ is the unique stationary point given $Q$. 
		Then, the nonlinear Markov chain is strongly ergodic.
	\end{theorem}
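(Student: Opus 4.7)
The plan is to reduce the two-state dynamics to a scalar autonomous ODE on $[0,1]$ and then deduce convergence by a standard monotonicity / phase-line argument, once the sign structure of $f$ is pinned down.

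First I would parameterize $\mathcal{P}(\{1,2\})$ by $m_1 \in [0,1]$ (with $m_2 = 1-m_1$) and observe that $m_1(t) := \Phi_1^t(m_0)$ satisfies the scalar autonomous ODE $\dot{m}_1 = f(m_1)$, since by construction $f$ is exactly the first component of $m^T Q(m)$. Strong ergodicity is then equivalent to $\Phi_1^t(m_0) \to \bar{m}$ for every $m_0 \in [0,1]$, since the $m_2$-component then automatically converges to $1-\bar{m}$.

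Next I would analyze the sign of $f$ on $[0,1]$. Using that $Q(m)$ is a rate matrix, evaluation at the corners yields $f(0) = Q_{21}(0,1) \ge 0$ and $f(1) = Q_{11}(1,0) = -Q_{12}(1,0) \le 0$. Continuity of $f$ together with the assumption that $\bar{m}$ is the \emph{unique} zero in $[0,1]$ then forces, via the intermediate value theorem, $f > 0$ on $[0,\bar{m})$ and $f < 0$ on $(\bar{m},1]$ (with the convention that one of these intervals is empty if $\bar{m} \in \{0,1\}$, in which case the opposite sign is uniform on the complement).

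Finally I would invoke a standard one-dimensional phase-line argument. For any initial condition with $(m_0)_1 < \bar{m}$, the trajectory $m_1(t)$ has strictly positive derivative whenever it lies in $[0,\bar{m})$, so it is strictly increasing there; moreover it cannot overshoot $\bar{m}$, because $\bar{m}$ is a fixed point of $\Phi^t$, so by the semigroup identity $\Phi^{t+s}(\bar{m}) = \Phi^t(\Phi^s(\bar{m})) = \bar{m}$, and by continuity of $t \mapsto m_1(t)$ any crossing would have to pass through $\bar{m}$, at which point the trajectory stays. Hence $m_1(t) \to L$ for some $L \le \bar{m}$, and since $\dot{m}_1(t) = f(m_1(t)) \to f(L)$ while a convergent monotone trajectory must have derivative tending to $0$, we conclude $f(L) = 0$ and thus $L = \bar{m}$. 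The case $(m_0)_1 > \bar{m}$ is symmetric and $(m_0)_1 = \bar{m}$ is trivial.

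The main obstacle I expect is not conceptual but bookkeeping: one must handle the boundary cases $\bar{m} \in \{0,1\}$ cleanly, and one must justify the no-overshoot claim without Lipschitz continuity of $f$ (so that uniqueness of ODE solutions is not available). The semigroup property of $\Phi^t$, which is given a priori from Section \ref{Section:Notation}, is precisely what makes this last point go through.
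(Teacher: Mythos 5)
Your proposal is correct and follows essentially the same route as the paper: reduce to the scalar ODE $\dot{m}_1 = f(m_1)$, use the rate-matrix structure to get $f(0)\ge 0$ and $f(1)\le 0$, combine with continuity and uniqueness of the zero to obtain $f>0$ on $[0,\bar{m})$ and $f<0$ on $(\bar{m},1]$, and conclude by a phase-line argument. Your explicit treatment of the no-overshoot issue via the semigroup identity (needed since only continuity, not Lipschitz continuity, of $f$ is assumed) is a welcome elaboration of a step the paper dispatches with ``directly follows.''
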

	
	\begin{proof}
		An equilibrium point is characterized by the property that $\frac{\partial}{\partial t} \Phi^t(m)=0$.
		By flow invariance of $\mathcal{P}(\mathcal{S})$ for the ordinary differential equation $\frac{\partial}{\partial t} \Phi^t(m_0) = \Phi^t(m_0) Q(\Phi^t(m_0))$ (see the proof of Theorem \ref{LipschitzContinuousGenerator}), which implies that $\frac{\partial}{\partial t} \Phi^t_1(m) + \frac{\partial}{\partial t} \Phi^t_2(m) =0$, this property is equivalent to the fact that $\frac{\partial}{\partial t} \Phi^t_1(m) = 0$.
		
		Since $\frac{\partial}{\partial t} \Phi^t_1(m) = f(m_1)$ and since we have a unique equilibrium point, we obtain that $f(\bar{m})=0$ and $f(m_1) \neq 0$ for all $m_1 \neq \bar{m}$.
		Since $f(\cdot)$ is continuous, we obtain that $f(\cdot)$ is non-vanishing on $[0,\bar{m})$ and $(\bar{m}, 1]$ and has uniform sign on each of these sets. 
		Since $Q(\cdot)$ is a conservative generator we moreover obtain that $f(0)\ge 0$ and $f(1)\le 0$. 
		Thus,  we obtain that $f(m_1)>0$ for all $m_1 \in [0,\bar{m})$ and $f(m_1)<0$ for all $m_1 \in (\bar{m},1]$.
		This in turn yields that $[0,1]$ is flow invariant for $\dot{m}_1 = f(m_1)$.
		
		Fix $m_0 \in \mathcal{P}(\mathcal{S})$. Then the systems $\Phi^t(m_0) = Q(\Phi^t(m_0))^T \Phi^t(m_0)$ and $\tilde{\Phi}^t(m_0)_1 = f(\tilde{\Phi}^t(m_0))$ are equivalent in the sense that $\Phi^t_1(m_0)= \tilde{\Phi}^t(m_0)$ for all $t \ge 0$, $m_0 \in \mathcal{P}(\{1,2\})$:
		Indeed, let $\Phi^t(m_0) = (\Phi^t_1(m_0),\Phi^t_2(m_0))$ be a solution of the differential equation $\frac{\partial}{\partial t} \Phi^t(m_0) = Q(\Phi^t(m_0))^T \Phi^t(m_0)$ with initial condition $\Phi^0(m_0)=m_0$. 
		By flow invariance of $\mathcal{P}(\mathcal{S})$ for $\frac{\partial}{\partial t} \Phi^t(m_0) = Q(\Phi^t(m_0))^T \Phi^t(m_0)$ (see Theorem \ref{LipschitzContinuousGenerator}), we have $\Phi^t_2(m_0) = 1-\Phi^t_1(m_0)$ for all $t \ge 0$. 
		Thus, $\frac{\partial}{\partial t} \Phi^t(m_0) = Q(\Phi^t(m_0))^T \Phi^t(m_0)$ is equivalent to
		\begin{align}
		\label{MyopicGlobal:Nonlinear:Equivalence}
		\begin{cases}
		\frac{\partial}{\partial t} \Phi^t_1(m_0) &=  \Phi^t_1(m_0) \cdot (Q_{11}(\Phi^t_1(m_0),1-\Phi^t_1(m_0))) \\
		&\quad  + (1-\Phi^t_1(m_0)) \cdot Q_{21}(\Phi^t_1(m_0),1-\Phi^t_1(m_0)) \\
		-\frac{\partial}{\partial t} \Phi^t_1(m_0) &=  \Phi^t_1(m_0) \cdot (-Q_{12}(\Phi^t_1(m_0),1-\Phi^t_1(m_0))) \\
		&\quad  + (1-\Phi^t_1(m_0)) \cdot Q_{22}(\Phi^t_1(m_0),1-\Phi^t_1(m_0)).
		\end{cases}
		\end{align} 
		Therefore, $\Phi^t_1(m_0)$ is indeed a solution of $\Phi^t_1(m_0) = f(\Phi^t_1(m_0))$.
		For the converse implication we first note that because $Q(m)$ is conservative for all $m \in \mathcal{P}(\mathcal{S})$ the last equation of \eqref{MyopicGlobal:Nonlinear:Equivalence} is the first equation multiplied by $(-1)$ .
		If $\tilde{\Phi}^t(m_0)$ satisfies $\frac{\partial}{\partial t} \tilde{\Phi}^t(m_0) = f(\tilde{\Phi}^t(m_0))$, $\tilde{\Phi}^0(m_0) = (m_0)_1 \in [0,1]$, then, by flow invariance, $\tilde{\Phi}^t(m_0)) \in [0,1]$ for all $t\ge0$.
		Thus, the function $\Phi^t(m_0) = (\tilde{\Phi}^t(m_0), 1- \tilde{\Phi}^t(m_0))$ satisfies $\frac{\partial}{\partial t} \Phi^t(m_0) = Q(\Phi^t(m_0))^T \Phi^t(m_0)$. 	
		
		The desired convergence statement directly follows from $f(m_1)>0$ for all $m_1 \in [0,\bar{m})$ and $f(m_1)<0$ for all $m_1 \in (\bar{m},1]$.
	\end{proof}

	We also obtain a sufficient criterion for the case of three states. The proof technique is similar to the two state case. Indeed, we first show that our system is equivalent to a two-dimensional system, for which we can then use standard tools for two-dimensional dynamical systems exploiting that the dynamical system has a particular shape since $Q(\cdot)$ is a conservative generator.

	As mentioned, we obtain for systems with three states that given $m_0 \in \mathcal{P}(\mathcal{S})$ the function $\Phi^t(m_0) = (\Phi^t_1(m_0), \Phi^t_2(m_0), \Phi^t_3(m_0))$ is a solution of $\frac{\partial}{\partial t} \Phi^t(m_0) =Q(\Phi^t(m_0))^T \Phi^t(m_0)$, $\Phi^0(m_0)=m_0$ if and only of $(\Phi^t_1(m_0), \Phi^t_2(m_0))$ is a solution of $$\begin{pmatrix}
	\frac{\partial}{\partial t} \Phi^t_1(m_0)\\ \frac{\partial}{\partial t} \Phi^t_2(m_0)
	\end{pmatrix} = f \begin{pmatrix}
	\Phi_1^t(m_0) \\ \Phi_2^t(m_0)
	\end{pmatrix}, \quad \begin{pmatrix}
	\Phi^0_1(m_0)) \\ \Phi^0_2(m_0)
	\end{pmatrix} = \begin{pmatrix}
	(m_0)_1 \\ (m_0)_2
	\end{pmatrix},$$ where 
	\begin{equation}
	\label{NonlinearMC:Sufficient:DefinitionF}
	f \begin{pmatrix}
	m_1 \\ m_2
	\end{pmatrix} = \begin{pmatrix}
	Q_{31}(\hat{m}) + (Q_{11} (\hat{m}) - Q_{31}(\hat{m})) m_1  + (Q_{21}(\hat{m})- Q_{31}(\hat{m})) m_2  \\
	Q_{32}(\hat{m}) + (Q_{12}(\hat{m}) - Q_{32}(\hat{m})) m_1 + (Q_{22}(\hat{m}) - Q_{32}(\hat{m})) m_2
	\end{pmatrix}	
	\end{equation} and $\hat{m} = (m_1,m_2, 1- m_1-m_2)$.
	Indeed, the proof is analogous to the proof for the two state case, the central  adjustment is to prove the flow invariance of $\{(m_1,m_2) \in [0,\infty): m_1 + m_2 \le 1\}$ for $(\Phi^t_1(m_0), \Phi^t_2(m_0))^T = f(\Phi^t_1(m_0),\Phi^t_2(m_0))$ instead of the flow invariance of $[0,1]$ for $\Phi^t_1(m_0) = f(\Phi^t_1(m_0))$. This statement is proven in the appendix (Lemma \ref{Lemma:Appendix}).
	
	To show the desired convergence statement, we now rely on the Poincar\'{e}-Bendixson Theorem \cite[Chapter 7]{TeschlODE}, which characterizes the $\omega$-limit sets $\omega_+(m_0)$ of a trajectory with initial condition $\Phi^0(m_0)=m_0$:
	
	\begin{theorem}
		\label{Theorem:ErgodicityS3}
		Let $O \supseteq \{(m_1,m_2) \in [0,\infty)^2 : m_1 + m_2 \le 1\}$ be a simply connected and bounded region such that there is a continuously differentiable function $f:O \rightarrow \mathbb{R}^2$  satisfying \eqref{NonlinearMC:Sufficient:DefinitionF} on $\mathcal{P}(\mathcal{S})$. 
		Let $\bar{m}$ be the unique stationary point given $Q(\cdot)$. 
		Furthermore, assume that
		\begin{itemize}
			\item[(a)] $\frac{\partial f_1}{\partial m_1} (m) + \frac{\partial f_2}{\partial m_2} (m)$ is non-vanishing for all $m \in O$ and has uniform sign  on $O$,
			\item[(b)] it holds that
			$$\frac{\partial f_1}{\partial m_1} (\bar{m}) \cdot \frac{\partial f_2}{\partial m_2} (\bar{m}) - \frac{\partial f_1}{\partial m_2} (\bar{m}) \cdot \frac{\partial f_2}{\partial m_1} (\bar{m})>0$$ or it holds that
			$$\left(\frac{\partial f_1}{\partial m_1} (\bar{m}) + \frac{\partial f_2}{\partial m_2} (\bar{m}) \right)^2 - 4 \left( \frac{\partial f_1}{\partial m_1} (\bar{m}) \cdot \frac{\partial f_2}{\partial m_2} (\bar{m}) - \frac{\partial f_1}{\partial m_2} (\bar{m}) \cdot \frac{\partial f_2}{\partial m_1} (\bar{m}) \right) <0 .$$
		\end{itemize}
		Then, the nonlinear Markov chain is strongly ergodic.
	\end{theorem}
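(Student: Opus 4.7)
The plan is to leverage the reduction, already established before the statement, from the three-state dynamics to the planar autonomous ODE $\dot m = f(m)$ on the compact, simply connected, flow-invariant region $\mathcal{P}(\mathcal{S})\subseteq O$ (flow-invariance coming from Lemma \ref{Lemma:Appendix}). On this two-dimensional system I would combine Bendixson's negative criterion with the Poincar\'e--Bendixson theorem. The first step is to invoke assumption (a): since the divergence $\partial f_1/\partial m_1 + \partial f_2/\partial m_2$ has uniform non-zero sign on the simply connected region $O$, Bendixson's criterion (a direct consequence of Green's theorem) excludes any closed orbit entirely contained in $O$, so there is neither a periodic orbit nor a homoclinic loop at the unique fixed point $\bar m$.

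Second, I would use assumption (b) to classify the linearisation at $\bar m$. Both branches of (b) force $\det J(\bar m) > 0$ (the second branch states $\mathrm{tr}\, J(\bar m)^2 < 4\det J(\bar m)$, which gives the first), so $\bar m$ is not a saddle and its eigenvalues have real parts of a common sign that coincides with the sign of $\mathrm{tr}\, J(\bar m)$ fixed by (a). A positive common sign would make $\bar m$ a local repeller, so a small perturbation of $\bar m$ would have an $\omega$-limit set disjoint from $\{\bar m\}$, which by Poincar\'e--Bendixson and the uniqueness of $\bar m$ would have to be a periodic orbit or a homoclinic graph at $\bar m$; both are excluded by Step~1. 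Hence the sign is negative and $\bar m$ is locally asymptotically stable.

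Third, I would apply the Poincar\'e--Bendixson theorem to an arbitrary trajectory $\Phi^t(m_0)$, which is precompact in $\mathcal{P}(\mathcal{S})$. Its $\omega$-limit set is a non-empty compact connected invariant set; with $\bar m$ the unique fixed point in $O$ the theorem leaves three possibilities: $\{\bar m\}$, a periodic orbit, or the graph consisting of $\bar m$ together with homoclinic loops. The second and third are excluded by Step~1, so $\omega_+(m_0)=\{\bar m\}$, which together with precompactness yields $\Phi^t(m_0)\to\bar m$ and hence strong ergodicity.

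The subtlest step to articulate cleanly is Step~2: assumptions (a) and (b) individually constrain the linearisation at $\bar m$ only modestly, and it is really their combination---strengthened by the global obstruction coming from Bendixson's criterion---that pins down the sign of the trace and thereby the asymptotic stability of $\bar m$. Avoiding a circular mix of global and local conclusions in that step is the main piece of bookkeeping; once it is settled, the remainder is a clean application of the Poincar\'e--Bendixson theorem to the flow-invariant simplex.
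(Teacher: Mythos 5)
Your proposal is correct and rests on the same two pillars as the paper's proof: the Poincar\'e--Bendixson trichotomy on the compact flow-invariant simplex, with Bendixson's negative criterion (assumption (a)) ruling out periodic orbits. The one genuine difference is how the third Poincar\'e--Bendixson alternative (a graphic consisting of $\bar m$ and homoclinic orbits) is excluded. The paper uses assumption (b) for exactly this: (b) together with (a) forces $\det Df(\bar m)>0$ and $\operatorname{tr} Df(\bar m)\neq 0$, so $\bar m$ is a hyperbolic node or focus rather than a saddle, and hence admits no homoclinic connection. You instead dispose of homoclinic loops by the same Green's-theorem argument that underlies Bendixson's criterion, applied to the Jordan curve formed by the loop together with the fixed point; this is valid (the flux of $f$ through that curve vanishes while the divergence integral does not), but note that it is an extension of the criterion as usually stated --- e.g.\ \cite[Theorem 3.5]{JordanNonlinear} speaks only of closed \emph{paths} --- so you should spell out the one-line Green's-theorem computation rather than attribute it to the named theorem. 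Your route has the interesting consequence that assumption (b) becomes logically redundant for the convergence conclusion, serving only (as in your Step 2) to classify the local stability of $\bar m$; the paper's route keeps (b) essential but needs only the textbook form of Bendixson's criterion. Your Step 2 itself is harmless but unnecessary: once $\omega_+(m_0)=\{\bar m\}$ for every $m_0$, strong ergodicity follows without any separate local stability argument.
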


	\begin{proof}
		Since  the set $F:= \{ (m_1, m_2)^T \in \mathbb{R}^2: m_1, m_2 \ge 0 \wedge m_1 + m_2 \le 1\}$ is flow invariant for $(\frac{\partial}{\partial t} \Phi^t_1(m_0),\frac{\partial}{\partial t} \Phi^t_2(m_0))^T = f(\Phi^t_1(m_0),\Phi^t_2(m_0))$, any trajectory will stay in this set.
		Since the set $F$ is compact, we obtain by \cite[Lemma 6.6]{TeschlODE} that $\omega_+(m_0 )$ lies $F$.
		Since there is, by assumption, only one stationary point we can apply the Poincar\'{e}-Bendixson Theorem \cite[Theorem 7.16]{TeschlODE}. 
		It yields that one of the following three cases holds:
		\begin{itemize}
			\item[(i)] $\omega_+(m_0) = \{\bar{m}\}$
			\item[(ii)] $\omega_+(m_0)$ is a regular periodic orbit
			\item[(iii)] $\omega_+(m_0)$ consists of (finitely many) fixed points $x_1, \ldots, x_k$ and non-closed orbits $\gamma(z)$ such that $\omega_\pm(z) \in \{x_1, \ldots, x_k\}$.
		\end{itemize}
		By condition (a) and Bedixson's criterion \cite[Theorem 3.5]{JordanNonlinear} the case (ii) is not possible.
		Since, by condition (b), the point $\bar{m}$ is not a saddle point, there is no homoclinic path joining $\bar{m}$ to itself.
		Therefore, since $\bar{m}$ is the only stationary point, also case (iii) is not possible.
		Thus, $\omega_+(m_0)=\{\bar{m}\}$.
		Since the considered trajectory lies in the compact set $F$, we moreover obtain by \cite[Lemma 6.7]{TeschlODE}
		that
		$$0 = \lim_{t \rightarrow \infty} d\left(\Phi^t(m_0), \omega_+(m_0) \right) = \lim_{t \rightarrow \infty} d\left(\Phi^t(m_0), \bar{m} \right).$$
	\end{proof}
	
	\begin{remark}
		The equivalence of the considered systems and $S-1$ systems on  some subset of $\mathbb{R}^{S-1}$ as well as the construction performed in Section \ref{Subsection:Periodic} hint the general problem for a larger number of states ($S \ge 4$). 
		It might happen that the dynamics of the nonlinear Markov chain describe a classical ``chaotic'' nonlinear system like the Lorentz system. 
		In other words, the difficulties that arise in the classical theory of dynamical systems might also arise here, for which reason criteria for a larger number of states are more complex. 
	\end{remark}

	\begin{example}
		Theorem \ref{Theorem:ErgodicityS3} now yields strong ergodicity of the nonlinear Markov chain introduced in the end of Section \ref{Section:InvariantDist}. In this setting the function $f$ is given by
		\[
		f \begin{pmatrix}
		m_1 \\ m_2
		\end{pmatrix} =
		\begin{pmatrix}
		\lambda - e m_1^2 - (b + \epsilon + \lambda) m_1 - \lambda m_2 \\
		\lambda + (b- \lambda) m_1 - e m_2^2 - (\epsilon + \lambda) m_2
		\end{pmatrix}
		\] and we moreover have $\frac{\partial f_1}{\partial m_1} (m) + \frac{\partial f_2}{\partial m_2}(m) <0$ for all $m \in N_\epsilon([0,1]^2)$ as well as 
		\[
		\frac{\partial f_1}{\partial m_1} (m) \frac{\partial f_2}{\partial m_2} (m) - \frac{\partial f_1}{\partial m_2}(m) \frac{\partial f_2}{\partial m_1} (m) >0
		\] for all $m \in [0,1]^2$ and, thus, in particular for the unique invariant distribution.
		Therefore, by Theorem \ref{Theorem:ErgodicityS3} we obtain strong ergodicity.
	\end{example}

	\appendix
	\section{Appendix}
	\label{appendix}
	
	\begin{proof}[Proof of Theorem \ref{LipschitzContinuousGenerator}]
		We first note that
		$$f(m):= \left( \sum_{i \in \mathcal{S}} m_i Q_{ij}(m) \right)_{j \in \mathcal{S}}$$ is Lipschitz continuous on $\mathcal{P}(\mathcal{S})$:
		Indeed, let $L$ be a Lipschitz constant for all functions $Q_{ij}(\cdot)$ ($i,j \in \mathcal{S}$) simultaneously. 
		Moreover, since $\mathcal{P}(\mathcal{S})$ is compact there is a finite constant $$M:= \sup_{m \in \mathcal{P}(\mathcal{S}), i,j \in \mathcal{S}} Q_{ij}(m).$$
		Thus, we have
		\allowdisplaybreaks
		\begin{align*}
			|f(m^1) - f(m^2)|_1 &\le (M+L)S \cdot \left| m^1 - m^2\right|_1.
		\end{align*}
		By McShane's extension theorem \cite{McShaneExtensionTheorem} there is a Lipschitz continuous extension $\tilde{f}: \mathbb{R}^S \rightarrow \mathbb{R}^S$ of $f$. 
		Let us fix an arbitrary $m_0 \in \mathcal{P}(\mathcal{S})$.
		By the classical existence and uniqueness theorem for ordinary differential equations, we obtain that there is a unique solution of $\Phi^\cdot(m_0) :[0,\infty) \rightarrow \mathbb{R}^S$ of $\frac{\partial}{\partial t} \Phi^t(m_0) = \tilde{f}(\Phi^t(m_0)), \Phi^0(m_0) = m_0$.
		
		As a next step we show that the vectors $f(m) = \tilde{f}(m)$ lie for all $m \in \mathcal{P}(\mathcal{S})$ in the Bouligand tangent cone 
		\begin{align*}
			T_{\mathcal{P}(\mathcal{S})}(m) &= \left\{ y \in \mathbb{R}^S: \liminf_{h \downarrow 0} \frac{d(m+hy, \mathcal{P}(\mathcal{S}))}{h} = 0 \right\} \\
			&= \left\{ y \in \mathbb{R}^S : y_i \ge 0 \forall i \in \mathcal{S} \text{ s.t. } m_i = 0 \wedge \sum_{i \in \mathcal{S}} y_i = 0 \right\},
		\end{align*} where the second line follows from \cite[Proposition 5.1.7]{AubinDifferentialInclusions}:		
		Indeed, since for all interior points of $\mathcal{P}(\mathcal{S})$ the condition is trivially satisfied, it suffices to consider the boundary points $m \in \partial \mathcal{P}(\mathcal{S})$.
		These points satisfy that there is at least one $j \in \mathcal{P}(\mathcal{S})$ such that $m_j=0$.
		Since the only non-positive column entry of $Q_{\cdot j}$ (which is $Q_{jj}$) gets weight $m_j$, the vector $f(m) = (\sum_{i \in \mathcal{S}} m_i Q_{ija}(m))_{j \in \mathcal{S}}$ will have non-negative entries at each $j \in \mathcal{S}$ such that $m_j = 0$.
		Since $Q$ is conservative, we moreover obtain that 
		$$\sum_{j \in \mathcal{S}} \sum_{i \in \mathcal{S}} m_i Q_{ija}(m) = \sum_{i \in \mathcal{S}} \underbrace{\sum_{j \in \mathcal{S}} Q_{ija}(m)}_{=0} m_i = 0.$$		
		Thus, $f(m) = \tilde{f}(m) \in T_{\mathcal{P}(\mathcal{S})}(m)$ for all $m \in \mathcal{P}(\mathcal{S})$. 
		Therefore, we obtain, by the classical flow invariance statement for ordinary differential equations (\cite[Theorem 10.XVI]{WalterODE1998}), that the solution satisfies $m(t) \in \mathcal{P}(\mathcal{S})$ for all $t \ge 0$.
		Thus, $\Phi^\cdot(m_0) :[0,\infty) \rightarrow \mathbb{R}^S$ is also the unique solution of $\frac{\partial}{\partial t} \Phi^t(m_0) = f(\Phi^t(m_0)), \Phi^0(m_0) = m_0$.
		The continuity of $\Phi^t(\cdot)$ follows from a classical general dependence theorem \cite[Theorem 12.VII]{WalterODE1998}.		
		\end{proof}
	
		\begin{lemma}
			\label{Lemma:Appendix}
			The set $N= \{(m_1,m_2) \in [0,\infty): m_1 + m_2 \le 1 \}$ is flow invariant for $(\Phi^t_1(m_0), \Phi^t_2(m_0))^T = f(\Phi^t_1(m_0),\Phi^t_2(m_0))$.
		\end{lemma}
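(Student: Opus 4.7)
The plan is to mimic the flow-invariance argument used in the proof of Theorem \ref{LipschitzContinuousGenerator}: characterize the Bouligand tangent cone $T_N(m)$ at each boundary point $m\in\partial N$ and show that the vector field $f(m)$ defined by \eqref{NonlinearMC:Sufficient:DefinitionF} lies in $T_N(m)$. Flow invariance then follows from \cite[Theorem 10.XVI]{WalterODE1998}, provided $f$ (or a Lipschitz extension of it) is Lipschitz on a neighbourhood of $N$, which is inherited from the Lipschitz continuity of $Q$. As in the proof of Theorem \ref{LipschitzContinuousGenerator}, McShane's extension theorem gives us such an extension on $\mathbb{R}^2$.

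The boundary $\partial N$ consists of three line segments: $E_1 = \{m_1=0,\,0\le m_2\le 1\}$, $E_2=\{m_2=0,\,0\le m_1\le 1\}$ and $E_3=\{m_1+m_2=1,\,m_1,m_2\ge 0\}$. By the analogue of \cite[Proposition 5.1.7]{AubinDifferentialInclusions}, the tangent cone at a relative interior point of $E_1$ is $\{y\in\mathbb{R}^2: y_1\ge 0\}$; analogously on $E_2$ one needs $y_2\ge 0$; and on $E_3$ one needs $y_1+y_2\le 0$. At corner points the cone is the intersection of the cones of the adjacent edges, so verifying the three edge conditions is sufficient.

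For $E_1$ (so $m_1=0$, $\hat m=(0,m_2,1-m_2)$) the formula \eqref{NonlinearMC:Sufficient:DefinitionF} gives $f_1(m)= (1-m_2)Q_{31}(\hat m)+m_2 Q_{21}(\hat m)\ge 0$ because the off-diagonal rates $Q_{21},Q_{31}$ are non-negative and $m_2\in[0,1]$. Symmetrically, on $E_2$ we obtain $f_2(m)=(1-m_1)Q_{32}(\hat m)+m_1 Q_{12}(\hat m)\ge 0$.

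The case $E_3$ is the one where the conservative nature of $Q$ is essential and is the main (mild) obstacle. Using $Q_{i1}+Q_{i2}=-Q_{i3}$ for $i=1,2,3$, a direct computation from \eqref{NonlinearMC:Sufficient:DefinitionF} yields
\begin{equation*}
f_1(m)+f_2(m) = -Q_{33}(\hat m)\bigl(1-m_1-m_2\bigr) - Q_{13}(\hat m)m_1 - Q_{23}(\hat m)m_2.
\end{equation*}
On $E_3$ the first term vanishes, so $f_1(m)+f_2(m)= -Q_{13}(\hat m)m_1-Q_{23}(\hat m)m_2\le 0$ since $Q_{13},Q_{23}\ge 0$ and $m_1,m_2\ge 0$. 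Combining the three edge inequalities gives $f(m)\in T_N(m)$ for every $m\in\partial N$, and flow invariance of $N$ follows from \cite[Theorem 10.XVI]{WalterODE1998}, completing the proof.
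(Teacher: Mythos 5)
Your proof is correct, and it takes a genuinely different route from the paper's. The paper proves the lemma via \cite[Lemma 1]{FernandesPracticalSets}, which represents $N$ as $\{x: g_i(x)\le 0\}$ and asks for the \emph{strict} inequality $\langle f(x),\nabla g_i(x)\rangle<0$ at each boundary point with $g_i(x)=0$; to get strictness it invokes conservativeness \emph{and} irreducibility of $Q$. You instead transplant the argument from the proof of Theorem \ref{LipschitzContinuousGenerator}: compute the Bouligand tangent cone of the triangle edge by edge, verify the non-strict inequalities $f_1\ge 0$ on $\{m_1=0\}$, $f_2\ge 0$ on $\{m_2=0\}$ and $f_1+f_2\le 0$ on $\{m_1+m_2=1\}$ directly from \eqref{NonlinearMC:Sufficient:DefinitionF} (your algebra, in particular the identity $f_1+f_2=-Q_{33}(\hat m)(1-m_1-m_2)-Q_{13}(\hat m)m_1-Q_{23}(\hat m)m_2$, checks out), and conclude by the Nagumo-type invariance theorem \cite[Theorem 10.XVI]{WalterODE1998}. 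What your approach buys: it needs only non-negativity of the off-diagonal rates and conservativeness, not irreducibility, and it is insensitive to the degenerate boundary points (corners, or points where some rates vanish) at which the strict inequality demanded by \cite[Lemma 1]{FernandesPracticalSets} can fail even for irreducible $Q$ — e.g.\ at the corner $m=(0,0)$ with $Q_{31}(\hat m)=0$ one only gets $f_1=0$. In that sense your argument is both more uniform with the rest of the paper and slightly more robust. The one hypothesis you should state explicitly is the regularity of $f$ needed for \cite[Theorem 10.XVI]{WalterODE1998}; as you note, this is supplied in the context where the lemma is used (Lipschitz continuity of $Q$, or the continuously differentiable extension to $O\supseteq N$ assumed in Theorem \ref{Theorem:ErgodicityS3}).
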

	
	\begin{proof}
		The statement follows from \cite[Lemma 1]{FernandesPracticalSets}.
		This lemma states that for an open set $O \subseteq \mathbb{R}^S$ and a family of continuously differentiable functions $g_i:O \rightarrow \mathbb{R}$ ($i\in \{1, \ldots, k\}$) the set 
		\[
		M = \{x \in O: g_i(x) \le 0 \text{ for all } i \in \{1, \ldots, k\} \}
		\] is flow invariant for $\dot{x} = f(x)$ whenever for any $x \in \partial M$ there is an $i \in \{1, \ldots, k\}$ such that $g_i(x)=0$ and 
		\[ 
		\langle f(x), \nabla g_i(x) \rangle <0.
		\]
		Indeed, in our case we have
		\[
		M = \{ x \in \mathbb{R}^S: -m_1 \le 0 \wedge -m_2 \le 0 \wedge m_1 + m_2 \le 1\}
		\]
		and the	boundary points of this set either satisfy $m_i=0$ for at least one $i \in \{1,2\}$ or $m_1+m_2 =1$.
		Since $Q(\cdot)$ is conservative and irreducible, we obtain 
		$		\left\langle f ((m_1,m_2)^T) , \nabla (-m_i) \right\rangle <0$ in the first case and 
		$		\left\langle f ((m_1,m_2)^T), \nabla (m_1 + m_2 -1) \right\rangle <0$ in the second case.
		Therefore, the claim follows.
	\end{proof}

	\bibliographystyle{abbrvnat}
	\bibliography{literatureNonlinearMC}

\end{document}